\DeclareMathOperator{\NN}{\mathsf{N}}
\DeclareMathOperator{\cc}{\mathsf{c}}
\DeclareMathOperator{\pr}{\mathsf{pr}}
\DeclareMathOperator{\opr}{\overline{\mathsf{pr}}}
\DeclareMathOperator{\id}{\mathsf{id}}
\DeclareMathOperator{\Coh}{\mathsf{Coh}}
\DeclareMathOperator{\Ho}{\mathsf H}
\DeclareMathOperator{\Tor}{\mathsf{Tor}}
\DeclareMathOperator{\rank}{\mathsf{rank}}
\DeclareMathOperator{\im}{\mathsf{im}}
\let\deg\relax
\DeclareMathOperator{\deg}{\mathsf{deg}}
\newcommand{\IN}{\mathbb{N}}
\newcommand{\wH}{\widetilde H}
\DeclareMathOperator{\VB}{\mathsf{VB}}
\DeclareMathOperator{\UU}{\mathrm{U}}
\newcommand{\leqnomode}{\tagsleft@true}
\newcommand{\reqnomode}{\tagsleft@false}
\let\ker\relax
\DeclareMathOperator{\ker}{\mathsf{ker}}
\newcommand{\sym}{\mathfrak S}
\newcommand{\reg}{\mathcal O}
\renewcommand{\theta}{\vartheta}
\renewcommand{\rho}{\varrho}
\renewcommand{\phi}{\varphi}
\newtheorem{theorem}{Theorem}[section]
  \newaliascnt{proposition}{theorem}
  \newtheorem{prop}[proposition]{Proposition}
  \newaliascnt{lemma}{theorem}
  \newtheorem{lemma}[lemma]{Lemma}
  \newaliascnt{corollary}{theorem}
\theoremstyle{definition}
  \newaliascnt{definition}{theorem}
  \newaliascnt{remark}{theorem}
  \newtheorem{remark}[remark]{Remark}
  \newaliascnt{condition}{theorem}
  \newaliascnt{question}{theorem}
  \newaliascnt{example}{theorem}
\begin{document}

\title{Stability of Tautological Bundles on Symmetric Products of Curves}
\author[A.\ Krug]{Andreas Krug}
\begin{abstract}
We prove that, if $C$ is a smooth projective curve over the complex numbers, and $E$ is a stable vector bundle on $C$ whose slope does not lie in the interval $[-1,n-1]$, then the associated tautological bundle $E^{[n]}$ on the symmetric product $C^{(n)}$ is again stable. Also, if $E$ is semi-stable and its slope does not lie in the interval $(-1,n-1)$, then $E^{[n]}$ is semi-stable.
\end{abstract}

\maketitle

\section*{Introduction}

Given a smooth projective curve $C$ over the complex numbers, there is an interesting series of related higher-dimensional smooth projective varieties, namely the symmetric products $C^{(n)}$. For every vector bundle $E$ on $C$ of rank $r$, there is a naturally associated vector bundle $E^{[n]}$ of rank $rn$ on the symmetric product $C^{(n)}$, called \textit{tautological} or \textit{secant bundle}. These tautological bundles carry important geometric information. For example, $k$-very ampleness of line bundles can be expressed in terms of the associated tautological bundles, and these bundles play an important role in the proof of the gonality conjecture of Ein and Lazarsfeld \cite{Ein-Lazarsfeld--gonalityconj}.
Tautological bundles on symmetric products of curves have been studied since the 1960s \cite{Schwarzenberger--bundlesplane, Schwarzenberger--secant, Mattuck--sym}, but there are still new results about these bundles discovered nowadays; see, for example, \cite{Wang-tautint, MOP, BasuDan-stab}. 

A natural problem is to decide when a tautological bundle is stable. 
Here, stability means slope stability with respect to the ample class $H_n$ that is represented by $C^{(n-1)}+x\subset C^{(n)}$ for any $x\in C$; see \autoref{subsect:intersection} for details. 
This problem has been much studied, mainly in the special case that $L$ is a line bundle; see \cite{AnconaOttaviani--stab, BohnhorstSpindler--stab, MistrettaPhD, EMLN-stab, BiswasNagaraj-stab, DanPal-stab, BasuDan-stab}.
It is easy to see that $E^{[n]}$ can only be stable if $E$ is stable; see \autoref{rem:stabnec}. Hence, the question is under which circumstances the stability of $E$ implies the stability of $E^{[n]}$. 
For line bundles and $n=2$, there is a complete answer given by Biswas and Nagaraj \cite{BiswasNagaraj-stab}. Namely, $L^{[2]}$ is unstable if and only if $L\cong \reg_C$, and $L^{[2]}$ is properly semi-stable if and only if $L\cong \reg(\pm x)$ for some point $x\in C$. Mistretta \cite[Sect.\ 4]{MistrettaPhD}, proved that $L^{[n]}$ is stable whenever $\deg(L)>n-1$. Recently, Dan and Pal \cite{DanPal-stab} and Basu and Dan \cite{BasuDan-stab} started the treatment of the problem for $E$  of higher rank, considering the case $n=2$. In \textit{loc.\ cit.}\ it is shown that $E^{[2]}$ is stable whenever $E$ is stable with $\deg(E)>\rank(E)$, and $E^{[2]}$ is semi-stable whenever $E$ is semi-stable with $\deg(E)\ge\rank(E)$. 

In the present paper, we generalise the result of \textit{loc.\ cit.}\ to arbitrary $n$, and complement it by a similar result for vector bundles of negative degree. Concretely, we prove the following 
\begin{theorem}\label{thm:main}
 Let $C$ be a smooth projective curve, and let $E\in \VB(C)$ be a vector bundle. We set $d:=\deg(E)$, $r:=\rank(E)$, and $\mu:=\mu(E)=\frac dr$.
\begin{enumerate}
\item\label{maini} Let $n\in  \IN$, and let $E$ be semi-stable with $d\ge (n-1)r$ or, equivalently, $\mu\ge n-1$. Then $E^{[n]}$ is slope semi-stable with respect to $H_n$.
\item\label{mainii} Let $n\in  \IN$, and let $E$ be stable with $d> (n-1)r$ or, equivalently, $\mu> n-1$. Then $E^{[n]}$ is slope stable with respect to $H_n$.
\item\label{mainiii} Let $E$ be semi-stable with $d\le -r$ or, equivalently, $\mu\le -1$. Then $E^{[n]}$ is slope semi-stable with respect to $H_n$ for every $n\in \IN$.
\item\label{mainiv} Let $E$ be stable with $d< -r$ or, equivalently, $\mu< -1$. Then $E^{[n]}$ is slope stable with respect to $H_n$ for every $n\in \IN$.
\end{enumerate}
\end{theorem}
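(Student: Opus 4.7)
The central input will be the concrete description
\[
E^{[n]}\;\cong\;\sigma_{*}\,p_C^{*}E,
\]
arising from the smooth-curve identification of the universal subscheme $\Xi_n\subset C\times C^{(n)}$ with $C\times C^{(n-1)}$ via $(y,D')\mapsto(y,y+D')$: here $\sigma\colon C\times C^{(n-1)}\to C^{(n)}$ is the addition map, which is finite flat of degree $n$, and $p_C\colon C\times C^{(n-1)}\to C$ is the projection. The polarisation pulls back as $\sigma^{*}H_n=\{y\}\times C^{(n-1)}+C\times H_{n-1}$ for any $y\in C$, which is the basic input for all slope computations.

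For parts (\ref{maini}) and (\ref{mainii}), the plan is to assume the existence of a saturated destabilising subsheaf $F\subsetneq E^{[n]}$ and transport the resulting inequality to $C\times C^{(n-1)}$ via the adjunction $\sigma^{*}\dashv\sigma_{*}$. Composing the induced inclusion $\sigma^{*}F\hookrightarrow \sigma^{*}\sigma_{*}p_C^{*}E$ with the counit $\sigma^{*}\sigma_{*}p_C^{*}E\to p_C^{*}E$ yields a morphism
\[
\phi\colon\sigma^{*}F\longrightarrow p_C^{*}E.
\]
The sheaf $p_C^{*}E$ is (semi-)stable with respect to $\sigma^{*}H_n$ (a general-fibre argument reduces this to the stability of $E$ on $C$, because $\sigma^{*}H_n$ restricts to $H_{n-1}$ on each fibre $\{x\}\times C^{(n-1)}$), so both $\im(\phi)$ and $\ker(\phi)$ are slope-controlled. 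A projection-formula calculation converts the resulting slope inequalities on $C\times C^{(n-1)}$ back to slopes on $C^{(n)}$, and the explicit numerical form of $\mu_{H_n}(E^{[n]})$ contains the term $\mu(E)-(n-1)$ with a positive combinatorial coefficient; this is precisely what makes $\mu(E)\geq n-1$ sufficient to rule out destabilisation. Strict inequality in (\ref{mainii}) will propagate from the fact that stability of $E$ forbids proper subsheaves of $p_C^{*}E$ of equal slope and full rank.

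For parts (\ref{mainiii}) and (\ref{mainiv}) one proceeds dually, with quotients in place of subsheaves. Grothendieck duality for the finite flat morphism $\sigma$ gives
\[
(E^{[n]})^{\vee}\;\cong\;\sigma_{*}\bigl(p_C^{*}E^{\vee}\otimes\omega_{\sigma}\bigr),
\]
where $\omega_{\sigma}$ is the relative dualising sheaf, supported on the ramification divisor of $\sigma$ (under the identification above, the universal subscheme $\Xi_{n-1}\subset C\times C^{(n-1)}$). A destabilising quotient $E^{[n]}\twoheadrightarrow Q$ dualises to a subsheaf of $(E^{[n]})^{\vee}$; pulling back along $\sigma$ and composing with the counit yields a morphism to $p_C^{*}E^{\vee}\otimes\omega_{\sigma}$, to which the slope analysis of the previous paragraph applies after twisting. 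The presence of $\omega_{\sigma}$ is what shifts the threshold from $n-1$ to $-1$ and accounts for the asymmetric bounds in the statement.

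The principal obstacle is the fine control of $\phi$: its image can fail to be locally free, its rank can drop below $\rank(F)$, and any kernel will be supported on (or near) the ramification locus of $\sigma$, so that purely numerical estimates risk being too weak unless the possible shapes of such kernels are classified. A secondary difficulty is that $\sigma^{*}H_n$ is not a product polarisation, so the intersection calculations on $C\times C^{(n-1)}$ must be carried out by hand; on the positive side, expanding $(\sigma^{*}H_n)^{k}$ against the fibre classes of $p_C$ gives closed combinatorial formulas that should, after bookkeeping, reduce the whole argument to the (semi-)stability of $E$ on the curve $C$.
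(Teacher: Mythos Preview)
Your plan has a real gap, and it sits precisely at the point you flag as the ``principal obstacle''. The kernel of the counit $\sigma^{*}\sigma_{*}p_C^{*}E\to p_C^{*}E$ is \emph{not} supported on the ramification locus: it is a vector bundle of rank $(n-1)r$, whose generic fibre over $(y,D')$ is $\bigoplus_{z\in D'}E_z\cong (E^{[n-1]})_{D'}$. Consequently $\ker\phi$ can have any rank up to $\min(\rank F,(n-1)r)$, and the decomposition $\deg_{\sigma^*H_n}(\sigma^*F)=\deg(\ker\phi)+\deg(\im\phi)$ together with the bounds $\mu(\im\phi)\le\mu(p_C^*E)=\mu$ and (even granting semi-stability of the kernel of the counit) $\mu(\ker\phi)\le\mu-n$ only yields $\mu(\sigma^*F)\le\mu-\tfrac{n}{s}\rank(\ker\phi)$. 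For this to force $\mu(\sigma^*F)\le\mu-n+1=\mu(\sigma^*E^{[n]})$ one needs $\rank(\ker\phi)\ge \tfrac{n-1}{n}s$; but the only structural bound available is $\rank(\im\phi)\le r$, giving $\rank(\ker\phi)\ge s-r$, which is strictly weaker whenever $s<nr$. So the numerics do not close. The same defect propagates to your dual argument for (iii)--(iv), since $(E^{[n]})^\vee\cong\sigma_*(p_C^*E^\vee\otimes\omega_\sigma)$ is not itself a tautological bundle and the twist by $\omega_\sigma$ does not supply the missing rank constraint.

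What the paper does instead is pull back all the way to $C^n$ along the full quotient $\pi_n\colon C^n\to C^{(n)}$ and exploit the $\mathfrak{S}_n$-equivariance. There one has $n$ short exact sequences $0\to\pr_i^*E(-\delta_n(i))\to\pi_n^*E^{[n]}\to\overline{\pr}_i^*\pi_{n-1}^*E^{[n-1]}\to 0$ (note the roles are reversed relative to your counit: the rank-$r$ piece is the \emph{sub}, not the quotient), and the subsheaves $\pr_i^*E(-\delta_n(i))$ are permuted by $\mathfrak{S}_n$ with pairwise trivial intersection. Hence, for $A=\pi_n^*F$ equivariant, the intersections $A'(i)=A\cap\pr_i^*E(-\delta_n(i))$ all have the same rank $s'$ and embed as a direct sum into $A$, forcing $ns'\le s$. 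This single inequality is the missing combinatorial input: for $\mu\ge n-1$ it lets a restriction-to-a-curve argument conclude directly, and for $\mu\le -1$ it combines with induction on $n$ (no duality is used). Your covering $\sigma\colon C\times C^{(n-1)}\to C^{(n)}$ has trivial deck group, so the analogue of this symmetry is invisible there; that is the structural reason the argument stalls.
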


The slope of a tautological bundle is given by the formula $\mu(E^{[n]})=\frac{d-(n-1)r}{rn}=\frac{\mu-n+1}n$;
see \autoref{subsec:mutaut}. Hence, we can reformulate our result as follows:

\emph{If the slope $\mu(E^{[n]})$ of a tautological bundle lies outside of the interval $[-1,0]$, the tautological bundle inherits the properties stability and semi-stability from $E$. If $\mu(E^{[n]})$ lies on the boundary of this interval, $E^{[n]}$ still inherits semi-stability from $E$.}

The key to our proof is a short exact sequence relating the tautological bundles $E^{[n-1]}$ and $E^{[n]}$; see \autoref{prop:ses}. This exact sequence allows us to prove \autoref{thm:main}, by a direct argument if $\deg E>0$, and by induction if $\deg E<0$.

The paper is organised as follows. In \autoref{subsec:stability} and \autoref{subsec:taut}, we recall the definitions of slope stability and of tautological bundles on the symmetric product of a curve. In \autoref{subsect:intersection}, we introduce some important divisors on $C^{(n)}$ and $C^n$, and compute their intersection numbers. In \autoref{subsec:stabpull} we show that stability of $E^{[n]}$ can be tested by computing the slopes of $\sym_n$-equivariant subsheaves of $\pi_n^*E^{[n]}$, where $\pi_n\colon C^n\to C^{(n)}$ is the $\sym_n$-quotient morphism. Then, in \autoref{subsec:restrition}, we explain how slopes of $\sym_n$-equivariant sheaves on $C^n$ can be computed by restriction to appropriate subvarieties. In the next \autoref{subsec:pullback}, we discuss the key short exact sequence relating $\pi_n^*E^{[n]}$ and $\pi_{n-1}E^{[n-1]}$. In \autoref{subsec:mutaut}, we compute the slope of tautological bundles and their pull-backs along the quotient morphisms, and remark that (semi-)stability of $E^{[n]}$ implies (semi-)stability of $E$. 
In \autoref{sec:proof}, we carry out the proof of \autoref{thm:main}. Halfway through the proof, we have to separate the cases of negative and positive degree $d$. These two cases are treated in \autoref{subsec:positivedeg} and \autoref{subsec:negativedeg}, respectively. In the final \autoref{sec:sharp}, we observe that \autoref{thm:main} is already optimal in the sense that the numerical conditions on the slopes cannot be weakened.

\subsection*{Conventions}
All our varieties are defined over the complex numbers. We denote the set of positive integers by $\IN$. 
Given two varieties $X$ and $Y$, we write the projections from their product to the factors as
$\pr_X=\pr^{X\times Y}_X\colon X\times Y\to X$ and $\pr_Y=\pr^{X\times Y}_Y\colon X\times Y\to X$.
We write $\VB(X)$ for the category of vector bundles and $\Coh(X)$ for the category of coherent sheaves on $X$.

\subsection*{Acknowledgements}
The author thanks Ben Anthes and S\"onke Rollenske for helpful discussions.

\section{Preliminaries}

\subsection{The notion of slope stability}\label{subsec:stability}
Let $X$ be a smooth projective variety. Let us fix an ample class $H\in \NN^1(X)$ in the group of divisors modulo numerical equivalence. For a coherent sheaf $A\in \Coh(X)$ with $\rank(A)\ge 1$, we define its \textit{degree} and its \textit{slope} with respect to $H$ by
\[
 \deg_H(A):=c_1(A)\cdot H^{n-1}:=\int_X c_1(A)\cdot H^{n-1}\quad,\quad \mu_H(A):=\frac{\deg_H(A)}{\rank(A)}\,.
\]
A vector bundle $E\in \VB(X)$ is called \textit{slope semi-stable} with respect to $H$ if, for every subsheaf $A\subset E$ with $\rank(A)<\rank E$, we have $\mu_H(A)\le \mu_H(E)$. It is called \textit{slope stable} with respect to $H$ if, for every subsheaf $A\subset E$ with $\rank(A)<\rank E$, we have the strict inequality $\mu_H(A)< \mu_H(E)$. Sometimes, we omit the word `slope' and just speak of semi-stable and stable vector bundles. 
Note that, if $X=C$ is a curve, the notion of stability and semi-stability is independent of the chosen ample class $H\in \NN^1(X)$.  

\subsection{Symmetric product of a curve and tautological bundles}\label{subsec:taut}

From now on, let $C$ always be a smooth projective curve, and let $n\in \IN$. There is a natural action by the symmetric group $\sym_n$ on the cartesian product $C^n$ by permutation of the factors. The corresponding quotient variety $C^{(n)}:=C^n/\sym_n$ is called the \textit{$n$-th symmetric product} of $C$. By the Chevalley--Shephard--Todd theorem, the variety $C^{(n)}$ is smooth, and the quotient morphism $\pi_n\colon C^n\to C^{(n)}$ is flat. 

The points of $C^{(n)}$ can be identified with the effective degree $n$ divisors on $C$. Accordingly, we write them as formal sums: $x_1+\dots+x_n=\pi_n(x_1,\dots,x_n)$ for $x_1,\dots, x_n\in C$.
In fact, the symmetric product is the fine moduli space of effective degree $n$ divisors (or, equivalently, zero-dimensional subschemes of length $n$) on $C$, with the universal divisor $\Xi_n\subset C^{(n)}\times C$ given by the image of the closed embedding 
\[C^{(n-1)}\times C\hookrightarrow C^{(n)}\times C\quad,\quad(x_1+\dots+ x_{n-1},x)\mapsto (x_1+\dots+ x_{n-1}+x,x)\,.\]
Now, the Fourier--Mukai transform along this universal divisor allows us to construct tautological vector bundles on $C^{(n)}$ from vector bundles on $C$. Concretely, for $E\in \VB(X)$, the associated \textit{tautological bundle} on $C^{(n)}$ is given by 
\[
 E^{[n]}:=\pr^{C^{(n)}\times C}_{C^{(n)}*}(\reg_{\Xi_n}\otimes \pr^{C^{(n)}\times C*}_{C} E)\cong a_*b^* E \,,
\]
where $a\colon \Xi_n\to C^{(n)}$ and $b\colon \Xi_n\to C$ are the restrictions of the projections $\pr^{C^{(n)}\times C}_{C^{(n)}}$ and $\pr^{C^{(n)}\times C}_{C}$, respectively. Since $a$ is flat and finite of degree $n$, the coherent sheaf $E^{[n]}$ is a vector bundle with $\rank(E^{[n]})=n\rank(E)$.

\subsection{Intersection theory on symmetric and cartesian products of curves}\label{subsect:intersection}

For $i=1,\dots, n$, we write $\pr_i\colon C^n\to C$ for the projection to the $i$-th factor, and $\opr_i\colon C^n\to C^{n-1}$ for the projection to the other $n-1$ factors. 
We set $\wH^n=\sum_{i=1}^n [\pr_i^{-1}(x)]\in \NN^1(C^n)$ for any point $x\in C$. Indeed, modulo numerical equivalence, the divisor $\pr_i^{-1}(x)$ is independent from the point $x\in C$. Using the Segre embedding, we see that $\wH_n$ is ample.

We define $H_n\in \NN^1(C^{(n)})$ as the unique class with $\pi^*H_n=\wH_n$. One can check easily that $H_n$ is represented by $C^{(n-1)}+ x$, the image of the closed embedding $C^{(n-1)}\hookrightarrow C^{(n)}$ with $\alpha\mapsto \alpha +x$, for any $x\in C$. Since $\wH_n$ is ample and $\pi_n\colon C^n\to C^{(n)}$ is finite, $H_n$ is ample too. We always consider stability of bundles on $C^{(n)}$ with respect to this ample class.

Another important divisor on $C^n$ is the \textit{big diagonal} $\delta_n=\sum_{1\le i<j\le n}\Delta_{ij}$ where
\begin{align}\label{eq:pairwise}\Delta_{ij}=\{(x_1,\dots, x_n)\mid x_i=x_j\}\subset C^n\,.\end{align} 
Note that, in the Chow group modulo numerical equivalence, we have 
\begin{align}\label{eq:Hn-1}(\wH_n)^{n-1}=(n-1)!\sum_{i=1}^n \opr_i^{-1}(y)\end{align} for any point $y=(x_1,\dots,x_{n-1})\in C^{n-1}$. Note that $\opr_i^{-1}(y)=x_1\times \dots\times x_{i-1}\times C\times x_{i+1}\times x_{n-1}$. From this, we can easily compute the following intersection numbers
\begin{align}\label{eq:intersectionnumbers}
(\wH_n)^n=n!\quad,\quad \delta_n\cdot (\wH_n)^{n-1}=n!(n-1)\,.
\end{align}

\subsection{Stability under pull-back along quotient morphism}\label{subsec:stabpull} 

Let $G$ be a finite group acting on a smooth projective variety $X$.
A \textit{$G$-equivariant sheaf} on $X$ is a coherent sheaf $B$ together with a $\textit{$G$-linearisation}$, that means a family of isomorphisms $\{\lambda_g\colon B\xrightarrow\sim g^*B\}_{g\in G}$ such that for every pair $g,h\in G$ the following diagram commutes:
\[
\xymatrix{
 B \ar^{\lambda_g}[r]  \ar@/_6mm/^{\lambda_{hg}}[rrr] & g^*B \ar^{g^*\lambda_h}[r] & g^*h^*B\ar^{\cong}[r] & (hg)^*B\,.  
} 
\]  
Let $\pi\colon X\to Y:=X/G$ be the quotient morphism. Then, for every $g\in G$, we have $\pi\circ g=\pi$, which yields a canonical isomorphism of functors $\mu_g\colon \pi^*\xrightarrow\sim g^*\pi^*$. This gives, for every $F\in\Coh(Y)$, a $G$-linearisation $\{\mu_g\colon \pi^*F\xrightarrow g^*\pi^* F\}_{g\in G}$ of $\pi^* F$. We call this the \textit{canonical $G$-linearisation of the pull-back $\pi^*F$}. By a \textit{$\sym_n$-equivariant subsheaf} of $\pi^* F$, we mean a subsheaf $A\subset \pi^*F$ which is preserved by the canonical $G$-linearisation of the pull-back: $\mu_g(A)=g^*A$ as subsheaves of $g^*\pi^*F$. 

\begin{lemma}\label{lem:stabpullbackequi}
Let a finite group $G$ act on a smooth projective variety $X$ such that $Y=X/G$ is again smooth and $\pi\colon X\to Y$ is flat. Let $H\in \NN^1(Y)$ be an ample class and $F\in \VB(Y)$.
 \begin{enumerate}
\item\label{i} If $\mu_{\pi^*H}(A)\le \mu_{\pi^*H}(\pi_n^*F)$ holds for all $\sym_n$-equivariant subsheaves $A$ of $\pi^*F$ with $\rank A<\rank F$, then $F$ is slope semi-stable with respect to $H$. 
\item\label{ii} If $\mu_{\pi^*H}(A)< \mu_{\pi^*H}(\pi_n^*F)$ holds for all $\sym_n$-equivariant subsheaves $A$ of $\pi^*F$ with $\rank A<\rank F$, then $F$ is slope stable with respect to $H$. 
\end{enumerate}
\end{lemma}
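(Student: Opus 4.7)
My plan is to argue by contraposition in both parts. Suppose $F$ is not slope semi-stable (resp.\ not slope stable) with respect to $H$. Then there is a subsheaf $B\subset F$ with $\rank B<\rank F$ and $\mu_H(B)>\mu_H(F)$ (resp.\ $\mu_H(B)\ge \mu_H(F)$). I will show that $A:=\pi^*B$ produces a $G$-equivariant subsheaf of $\pi^*F$ violating the hypothesis.

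For the first key step, since $\pi$ is flat, the pull-back functor $\pi^*$ is exact, so the inclusion $B\hookrightarrow F$ yields an inclusion $\pi^*B\hookrightarrow \pi^*F$. The canonical $G$-linearisations on $\pi^*B$ and $\pi^*F$ are induced by the same natural transformation $\mu_g\colon \pi^*\Rightarrow g^*\pi^*$, so this inclusion is compatible with the linearisations, and hence $A=\pi^*B$ is a $G$-equivariant subsheaf of $\pi^*F$ in the sense defined above. Moreover $\rank A=\rank B<\rank F=\rank\pi^*F$, so $A$ is eligible to be tested by the hypothesis.

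The next step is to compare slopes. Because $G$ is finite, the quotient morphism $\pi\colon X\to Y$ is finite, and together with the assumed flatness this makes $\pi$ finite flat of degree $|G|$; in particular $\pi_*\pi^*\alpha=|G|\cdot\alpha$ for every class $\alpha$ in the numerical Chow group of $Y$. Applying this with $\alpha=c_1(B)\cdot H^{\dim Y -1}$ and using the projection formula $(\pi^*c_1(B))\cdot(\pi^*H)^{\dim Y -1}=\pi^*(c_1(B)\cdot H^{\dim Y -1})$, I obtain
\[
\deg_{\pi^*H}(\pi^*B)=|G|\cdot\deg_H(B),
\]
and hence $\mu_{\pi^*H}(\pi^*B)=|G|\cdot\mu_H(B)$. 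The same identity applied to $F$ gives $\mu_{\pi^*H}(\pi^*F)=|G|\cdot\mu_H(F)$.

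Putting these together, the strict inequality $\mu_H(B)>\mu_H(F)$ becomes $\mu_{\pi^*H}(A)>\mu_{\pi^*H}(\pi^*F)$, contradicting the hypothesis of \ref{i}, and similarly the non-strict inequality $\mu_H(B)\ge\mu_H(F)$ contradicts the hypothesis of \ref{ii}. Nothing in this argument looks delicate; the only point that needs care is verifying that the pull-back inclusion $\pi^*B\subset\pi^*F$ really is $G$-equivariant for the canonical linearisations, but this is immediate from the naturality of $\mu_g$ in its argument.
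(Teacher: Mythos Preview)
Your argument is correct and matches the paper's proof essentially line for line: both rely on the facts that $\pi^*B\subset\pi^*F$ is $G$-equivariant for the canonical linearisation and that $\mu_{\pi^*H}(\pi^*B)=|G|\cdot\mu_H(B)$, the latter being cited from \cite[Lem.\ 3.2.1]{HL} in the paper while you supply the projection-formula computation directly. The only cosmetic difference is that you phrase it as a contraposition whereas the paper states the implication directly.
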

\begin{proof}
For every $B\in \Coh(Y)$ with $\rank B>0$, we have $\mu_{\pi^*H}(\pi^*B)=|G|\cdot \mu_H(B)$; see \cite[Lem.\ 3.2.1]{HL}. Hence, for $F$ to be semi-stable, it is sufficient to have $\mu_{\pi^*H}(\pi^*B)\le \mu_{\pi^*H}(\pi^*F)$ for every subsheaf $B\subset F$ with $\rank B<\rank F$. The assertion of part \ref{i} follows from the fact that $\pi^*B$ is a $G$-equivariant subsheaf of $\pi^*F$ whenever $B$ is a subsheaf of $F$. The proof of part \ref{ii} is completely analogous.   
\end{proof}

See \cite[Sect.\ 4.2]{MistrettaPhD} for a similar criterion for slope stability of sheaves on quotients. 

\subsection{Some technical lemmas concerning restriction of sheaves}\label{subsec:restrition}

\begin{lemma}\label{lem:torvanish}
Let $X$ be a smooth variety, $D\subset X$ an effective divisor, and $F\in \Coh(X)$. Then:
\[
 \Tor_1(\reg_D,F)=0\quad\Longleftrightarrow\quad \text{$D$ does not contain an associated point of $F$.}
\]
\end{lemma}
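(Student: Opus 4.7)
The plan is to pass to stalks, identify $\Tor_1$ with the kernel of multiplication by a local equation of $D$, and then invoke the commutative-algebra characterisation of non-zero-divisors via associated primes.

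First I would use the fact that, since $D$ is an effective Cartier divisor on the smooth variety $X$, it is Zariski-locally cut out by a single non-zero-divisor $s\in\reg_X$. The Koszul-type short exact sequence
$$0\to\reg_X(-D)\xrightarrow{\,\cdot s\,}\reg_X\to\reg_D\to 0$$
is therefore a two-term locally free resolution of $\reg_D$. Tensoring with $F$ and passing to the long exact $\Tor$-sequence identifies
$$\Tor_1(\reg_D,F)_q\;\cong\;\ker\bigl(F_q\xrightarrow{\,\cdot s_q\,}F_q\bigr)$$
stalk by stalk. For $q\notin D$ the element $s_q$ is a unit, so this kernel vanishes automatically, and the whole question collapses to: is $s_q$ a non-zero-divisor on $F_q$ for every $q\in D$?

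At this point I would invoke the standard fact from commutative algebra that, over a Noetherian local ring, an element of the maximal ideal acts injectively on a finitely generated module precisely when it lies outside every associated prime of that module. Combining this with the compatibility of associated primes with localisation---namely that $\operatorname{Ass}_{\reg_{X,q}}(F_q)$ is exactly the set of germs at $q$ of the associated points $p\in\operatorname{Ass}(F)$ with $q\in\overline{\{p\}}$---the condition ``$s_q\in\mathfrak p$ for some $\mathfrak p\in\operatorname{Ass}_{\reg_{X,q}}(F_q)$'' translates into the geometric statement that $s$ vanishes on $\overline{\{p\}}$ near $q$; since $\overline{\{p\}}$ is irreducible and $D$ closed, this is the same as $\overline{\{p\}}\subset D$, i.e.\ $p\in D$. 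Packaging the two directions then yields the equivalence: if $D$ contains some $p\in\operatorname{Ass}(F)$, pick any specialisation $q$ of $p$ inside $D$ (e.g.\ a closed point of $\overline{\{p\}}$) to get a nonzero stalk of $\Tor_1(\reg_D,F)$; conversely, any nonvanishing stalk at a point $q\in D$ produces an associated point $p\in D$ of $F$. I do not anticipate any real obstacle; the only item requiring mild care is the bookkeeping between associated primes of $F_q$ and associated points of $F$ itself, which is entirely classical.
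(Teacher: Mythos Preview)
Your proposal is correct and follows essentially the same route as the paper: both use the two-term locally free resolution $0\to\reg_X(-D)\xrightarrow{s}\reg_X\to\reg_D\to 0$ to identify $\Tor_1(\reg_D,F)$ with the kernel of $s\otimes\id_F$, and then invoke the characterisation of zero-divisors via associated primes. The paper simply asserts the final equivalence in one sentence, whereas you spell out the stalk-local argument and the bookkeeping with associated primes; otherwise the proofs are identical.
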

\begin{proof}
Let $s\in \Ho^0(\reg_X(D))$ be a global section defining $D$. Since 
\[0\to \reg_X(-D)\xrightarrow{s} \reg_X\to \reg_D\to 0\]
is a locally free resolution of $\reg_D$, we have $\Tor_1(\reg_D,F)=\ker(s\otimes \id_F)$. This kernel is non-trivial if and only if $D$ contains an associated point of $F$. 
\end{proof}

\begin{lemma}\label{lem:torvanishCn}
Let $F_1,\dots,F_k\in \Coh(C^n)$ be finite collection of coherent sheaves on $C^n$, and let $m\le n$. Then there exist points $x_{m+1},\dots,x_n\in C$ such that, if 
\[
 \iota\colon C^m\hookrightarrow C^n\quad, \quad (t_1,\dots, t_m)\mapsto (t_1,\dots, t_m, x_{m+1},\dots, x_n)
\]
denotes the closed embedding with image $C^m\times x_{m+1}\times \dots\times x_n$, the ranks of the $F_i$ do not change under pull-back along $\iota$ and all the higher pull-backs of the $F_i$ along $\iota$ vanish:
\[
\forall\,i=1,\dots,k\,: \quad \rank(\iota^*F_i)=\rank(F_i)\quad, \quad L^{j}\iota^* (F_i)=0 \quad\text{for $j\neq 0$.} 
\]
\end{lemma}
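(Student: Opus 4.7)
The plan is to proceed by induction on the codimension $n - m$. The base case $n = m$ is trivial: take $\iota = \id_{C^n}$. For the inductive step with $m < n$, I will factor the desired embedding as a composition
\[
\iota \colon C^m \xrightarrow{\iota'} C^{n-1} \xrightarrow{\iota_{n-1}} C^n,
\]
where $\iota_{n-1}(t_1,\dots,t_{n-1}) = (t_1,\dots,t_{n-1}, x_n)$ fixes a single last coordinate, and $\iota'$ is of the required form with target $C^{n-1}$. The idea is to first pick $x_n \in C$ so that $\iota_{n-1}$ has the required properties with respect to $F_1, \dots, F_k$, and then apply the induction hypothesis to the pull-backs $\iota_{n-1}^* F_1, \dots, \iota_{n-1}^* F_k$ on $C^{n-1}$ with the same parameter $m$.

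For the choice of $x_n$, I aim to impose two conditions on the divisor $D := \iota_{n-1}(C^{n-1}) = \pr_n^{-1}(x_n) \subset C^n$: first, $D$ should not be contained in the closed degeneracy locus $N_i := \{p \in C^n : \dim_{k(p)} F_i \otimes k(p) > \rank(F_i)\}$, ensuring that the generic rank of $\iota_{n-1}^* F_i$ coincides with that of $F_i$; second, $D$ should contain no associated point of $F_i$, so that \autoref{lem:torvanish} gives $\Tor_1^{\reg_{C^n}}(\reg_D, F_i) = 0$, which, since $D$ admits a length-one locally free resolution, forces $L^j \iota_{n-1}^* F_i = 0$ for all $j \neq 0$. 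Each of these two conditions excludes only finitely many values of $x_n \in C$: for the first, the set $\{x \in C : \pr_n^{-1}(x) \subseteq N_i\}$ is closed and proper in $C$, hence finite; for the second, each of the finitely many associated subvarieties $Z \subset C^n$ of each $F_i$ contributes at most one bad point $x_n$, namely $\pr_n(Z)$ when this image consists of a single point. Avoiding the union of these finite bad sets over $i=1,\dots,k$, I obtain a suitable $x_n$.

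With $x_n$ fixed, the sheaves $\iota_{n-1}^* F_1, \dots, \iota_{n-1}^* F_k$ form a finite collection on $C^{n-1}$ with $\rank(\iota_{n-1}^* F_i) = \rank(F_i)$. The induction hypothesis applied to them (same $m$, with $n$ replaced by $n-1$) yields points $x_{m+1}, \dots, x_{n-1} \in C$ so that the induced embedding $\iota'$ preserves ranks and has vanishing higher derived pull-backs. Since $L\iota_{n-1}^* F_i = \iota_{n-1}^* F_i$ and $L(\iota')^*(\iota_{n-1}^* F_i) = (\iota')^*(\iota_{n-1}^* F_i)$ are both concentrated in degree zero, so is their composition $L\iota^* F_i = (\iota')^* \iota_{n-1}^* F_i$, and the ranks match throughout.

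There is no deep obstacle here beyond careful bookkeeping: the main input is \autoref{lem:torvanish}, and the reduction to a single-coordinate slice at each step keeps the Tor-vanishing tractable by working with divisors rather than higher codimension subschemes. The only point that requires a brief verification is that conditions (a) and (b) on $x_n$ each exclude only finitely many points of $C$, which follows from the finiteness of the associated primes of a coherent sheaf and from the elementary observation that the locus of $x \in C$ whose fibre $\pr_n^{-1}(x)$ is contained in a fixed proper closed subset of $C^n$ is itself closed and proper in $C$.
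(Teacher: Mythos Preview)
Your proof is correct and follows essentially the same inductive approach as the paper, fixing one coordinate at a time as a divisor and invoking \autoref{lem:torvanish} together with a genericity argument for rank preservation. The only cosmetic difference is the order: you fix $x_n$ first and then recurse on $C^{n-1}$, whereas the paper first obtains $x_{m+2},\dots,x_n$ via the induction hypothesis and then chooses $x_{m+1}$, which leads it to phrase the vanishing of $L^j\iota^*$ via a (trivially degenerating) spectral sequence rather than, as you do, directly as a composition of derived pullbacks each concentrated in degree zero.
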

\begin{proof}
We proceed by inverse induction on $m$. For $m=n$, we have $\iota=\id$, and the assertion is trivial. For general $m$, by the induction hypothesis, we may assume that we already found $x_{m+2},\dots, x_n\in C$ such that, for 
\[
 \alpha\colon C^{m+1}\hookrightarrow C^n\quad,\quad (t_1,\dots, t_{m+1})\mapsto (t_1,\dots, t_{m+1}, x_{m+2}, \dots, x_n)\,, 
\]
we have $\rank(\alpha^*F_i)=\rank(F_i)$ and $L^{j}\alpha^* (F_i)=0$ for all $i=1,\dots, k$ and all $j\neq 0$. 
Now, there exists a non-empty $U\subset C^{m+1}$ over which all the $\alpha^*F_{i}$ are locally free. We choose $x_{m+1}\in C$ in such a way that $C^m\times x_{m+1}\subset C^{m+1}$ has a non-empty intersection with $U$ and does not contain any of the finitely many associated points of the sheaves $\alpha^*F_{i}$. 
We have $\iota=\beta\circ \alpha$ where $\beta\colon C^m\to C^{m+1}$ with $\beta(t_1,\dots, t_m)=(t_1,\dots, t_m,x)$ is the closed embedding with image $C^m\times x_{m+1}$.  
Because of 
$C^m\times x_{m+1}$ meeting $U$, we have
\[\rank(\iota^*F_i)=\rank(\alpha^* F_i)=\rank(F_i) \quad \text{for every $i=1,\dots,k$.}\]
By our choice to avoid the associated points of the $\alpha^*F_i$, \autoref{lem:torvanish} gives $L^{p}\beta^*(\alpha^* F_i)=0$ for all $p\neq 0$. The assertion follows by the spectral sequence
\[
E_2^{p,q}=L^{p}\beta^*(L^q \alpha^* (F_i))\quad\Longrightarrow \quad E^j=L^j \alpha^* (F_i)\,. \qedhere 
\]
\end{proof}

\begin{lemma}\label{lem:restrictiondeg}
 Let $A$ be an $\sym_n$-equivariant sheaf on $C^n$.
\begin{enumerate}
 \item\label{1}  Let $x_{2},\dots,x_n\in C$ be points such that $L^j\iota^*(A)=0$ for all $j\neq 0$ where 
$\iota\colon C\hookrightarrow C^n$ is given by $\iota(t)=(t, x_2,\dots, x_n)$. Then 
 \[
  \deg_{\wH_n}(A)=n!\deg(\iota^*A)\,.
 \]
 \item\label{2}  Let $x\in C$ be a point such that $L^j\iota^*(A)=0$ for all $j\neq 0$ where 
$\iota\colon C^{m-1}\hookrightarrow C^n$ is given by $\iota(t_1,\dots,t_{n-1})=(t_1,\dots,t_{n-1},x)$. Then 
 \[
  \deg_{\wH_n}(A)=n\deg_{\wH_{n-1}}(\iota^*A)\,.
 \]
\end{enumerate}
\end{lemma}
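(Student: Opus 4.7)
The central observation I plan to exploit is that the $\sym_n$-linearisation of $A$ yields isomorphisms $\sigma^*A\cong A$ for every $\sigma\in\sym_n$, whence $c_1(A)\in\NN^1(C^n)$ is $\sym_n$-invariant. Both parts will then follow by combining this symmetry with the intersection formulae for $\wH_n$ established in \autoref{subsect:intersection} and the projection formula.

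For part \ref{1}, I would apply the identity $\wH_n^{n-1}=(n-1)!\sum_{i=1}^n\opr_i^{-1}(y_i)$ from \eqref{eq:Hn-1}, choosing points $y_i\in C^{n-1}$ so that $\opr_1^{-1}(y_1)$ equals the image of $\iota$ and the remaining $y_i$ are obtained from $y_1$ by the transpositions $(1\,i)$. By the $\sym_n$-invariance of $c_1(A)$, the $n$ intersection numbers $c_1(A)\cdot\opr_i^{-1}(y_i)$ all coincide, giving
\[
\deg_{\wH_n}(A)=n!\cdot c_1(A)\cdot\iota_*[C].
\]
The projection formula then rewrites this as $n!\cdot(\iota^*c_1(A)\cap[C])$, and the Tor-vanishing hypothesis allows me to identify $\iota^*c_1(A)$ with $c_1(\iota^*A)$, yielding $n!\deg(\iota^*A)$.

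For part \ref{2}, I would instead decompose $\wH_n=\sum_{i=1}^n[\pr_i^{-1}(x_i)]$ and expand $c_1(A)\cdot\wH_n^{n-1}$ accordingly. Invoking the $\sym_n$-invariance of $c_1(A)$ once more, the $n$ summands $c_1(A)\cdot[\pr_i^{-1}(x_i)]\cdot\wH_n^{n-2}$ are all equal, so the total equals $n\cdot c_1(A)\cdot\iota_*[C^{n-1}]\cdot\wH_n^{n-2}$, where I take the chosen basepoint for the $n$-th factor to be the given $x$. The projection formula converts this into $n\cdot\iota^*c_1(A)\cdot(\iota^*\wH_n)^{n-2}$. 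A short verification shows $\iota^*\wH_n=\wH_{n-1}$ in $\NN^1(C^{n-1})$: the summand $\iota^*[\pr_n^{-1}(x)]$ vanishes numerically (replacing $x$ by a different point makes the intersection empty), while each of the other $n-1$ summands restricts to the corresponding divisor on $C^{n-1}$. Combining this with $c_1(\iota^*A)=\iota^*c_1(A)$ closes the computation.

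The only subtlety I expect is the identification $c_1(\iota^*A)=\iota^*c_1(A)$: this is precisely where the Tor-vanishing assumption $L^j\iota^*A=0$ for $j\neq 0$ is needed, ensuring that the naive pullback agrees with the derived pullback as a class in K-theory so that their first Chern classes coincide. Everything else is symmetry and the projection formula, so I do not anticipate any serious technical obstacle.
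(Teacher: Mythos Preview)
Your proposal is correct and follows essentially the same approach as the paper: for part~\ref{1} you use the decomposition \eqref{eq:Hn-1}, the $\sym_n$-invariance of $\cc_1(A)$, the projection formula, and the Tor-vanishing to identify $\iota^*\cc_1(A)=\cc_1(\iota^*A)$, exactly as the paper does. For part~\ref{2} the paper merely says the proof is ``very similar'', while you spell out the natural argument (peel off one factor of $\wH_n$, use symmetry, apply the projection formula, and verify $\iota^*\wH_n=\wH_{n-1}$ numerically); this is precisely the intended computation.
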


\begin{proof}
In the set-up of part \ref{1}, we have $\iota(C)=\opr_1^{-1}(y)$ with $y=(x_2,\dots, x_n)\in C^{n-1}$. By projection formula, we have $[\opr_1^{-1}(y)]\cdot \cc_1(A)=\iota^*\cc_1(A)=\cc_1(\iota^*A)=\deg(\iota^*A)$, where the equality $\iota^*\cc_1(A)=\cc_1(\iota^*A)$ is due to the vanishing of the higher derived pull-backs. By the $\sym_n$-equivariance of $A$, we get $[\opr_i^{-1}(y)]\cdot \cc_1(A)=[\opr_1^{-1}(y)]\cdot \cc_1(A)=\deg(\iota^*A)$ for every $i=1,\dots, n$. Combining this with \eqref{eq:Hn-1} gives
\begin{align*}
 \deg_{\wH_n}(A)=(\wH_n)^{n-1}\cdot \cc_1(A)=(n-1)!\bigl(\sum_{i=1}^n [\opr_i^{-1}(y)]\bigr)\cdot \cc_1(A)&=(n-1)!n[\opr_1^{-1}(y)]\cdot \cc_1(A)\\&=n!\deg(\iota^*A)\,.
\end{align*}
The proof of part \ref{2} is very similar.
\end{proof}

\subsection{Pull-back of tautological bundles along the $\sym_n$-quotient}\label{subsec:pullback}

For $i=1,\dots,n$, we consider the divisor $\delta_n(i):=\sum_{j\in \{1,\dots,n\}\setminus\{i\}} \Delta_{ij}$ on $C^n$; compare \eqref{eq:pairwise}.

\begin{prop}\label{prop:ses}
For every $i=1,\dots, n$, there is a short exact sequence
\begin{align}\label{eq:keyses}
0\to \pr_i^* E(-\delta_n(i))\to \pi_n^* E^{[n]}\to \overline\pr_{i}^*\pi_{n-1}^*E^{[n-1]}\to 0\,. 
\end{align}
The subsheaves $\UU_n(E,i):=\im\bigl(\pr_i^* E(-\delta_n(i))\to \pi_n^* E^{[n]}\bigr)$ of $\pi_n^*E^{[n]}$ defined by these sequences have pairwise trivial intersections:
\begin{align}\label{eq:pairwisedisj}
\UU_n(E,i)\cap \UU_n(E,i)\quad\text{for $i\neq j$.} \end{align}
Furthermore, these subsheaves get permuted by the natural $\sym_n$-linearisation of the pull-back $\pi_n^* E$:
If $\sigma(i)=j$, we have the equality $\mu_\sigma(\UU_n(E,i))=\sigma^*\UU_n(E,j)$ of subsheaves of $\sigma^*\pi_n^*E^{[n]}$.
\end{prop}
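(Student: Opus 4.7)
The plan is to derive the short exact sequence from a residue sequence on $C^n\times C$ combined with flat base change, and then to deduce the disjointness and equivariance claims by a generic-fibre argument and by functoriality of the construction.

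First I would apply flat base change to the Cartesian square given by the flat quotient $\pi_n\colon C^n\to C^{(n)}$ and the universal divisor $\Xi_n\subset C^{(n)}\times C$. Writing $p,q$ for the two projections of $C^n\times C$ and $\widetilde\Xi_n:=(\pi_n\times\id_C)^{-1}(\Xi_n)$, this gives $\pi_n^*E^{[n]}\cong p_*(\mathcal O_{\widetilde\Xi_n}\otimes q^*E)$. The next step is to identify $\widetilde\Xi_n$, as an effective Cartier divisor on $C^n\times C$, with the sum $\Gamma_1+\dots+\Gamma_n$ of the graphs $\Gamma_i$ of the projections $\pr_i\colon C^n\to C$: locally on $C^{(n)}$ the divisor $\Xi_n$ is cut out by the polynomial $x^n-e_1x^{n-1}+\dots+(-1)^ne_n$ in the $C$-coordinate $x$ and the elementary symmetric polynomials $e_k$ of local coordinates $\alpha_1,\dots,\alpha_n$ on $C^n$, which after pull-back to $C^n\times C$ factorises as $\prod_i(x-\alpha_i)$, and the $i$-th factor cuts out exactly $\Gamma_i$.

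Next, fix an index $i$ and put $D_i:=\sum_{j\neq i}\Gamma_j$. Since $\Gamma_i$ and $D_i$ share no common component, the standard residue sequence
\[
0\to \mathcal O_{\Gamma_i}(-D_i|_{\Gamma_i})\to \mathcal O_{\widetilde\Xi_n}\to \mathcal O_{D_i}\to 0
\]
is exact; tensoring with the locally free sheaf $q^*E$ preserves exactness, and because $p$ is finite on each of the three supports the higher direct images vanish, so applying $p_*$ yields a short exact sequence on $C^n$. To identify the three terms I would use the isomorphism $p|_{\Gamma_i}\colon\Gamma_i\xrightarrow\sim C^n$ with $q|_{\Gamma_i}=\pr_i$, under which $\Gamma_j\cap\Gamma_i$ becomes $\Delta_{ij}$ for $j\neq i$; this identifies the left term with $\pr_i^*E(-\delta_n(i))$. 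The middle term is $\pi_n^*E^{[n]}$ by the base change of the previous paragraph, and a second application of flat base change along $\overline\pr_i\times\id_C\colon C^n\times C\to C^{n-1}\times C$, under which $D_i$ is the pull-back of $\widetilde\Xi_{n-1}$, identifies the right term with $\overline\pr_i^*\pi_{n-1}^*E^{[n-1]}$.

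For the remaining claims, the injectivity in the sequence just constructed shows that $\UU_n(E,i)\cong\pr_i^*E(-\delta_n(i))$, so $\UU_n(E,i)$ is locally free, hence torsion-free. At a point $(x_1,\dots,x_n)$ with pairwise distinct coordinates the fibre of $\pi_n^*E^{[n]}$ decomposes as $\bigoplus_k E_{x_k}$, and the fibre of $\UU_n(E,i)$ is visibly the $i$-th summand (the quotient $\overline\pr_i^*\pi_{n-1}^*E^{[n-1]}$ has fibre $\bigoplus_{k\neq i}E_{x_k}$); therefore $\UU_n(E,i)\cap\UU_n(E,j)$ has generic rank zero and, being torsion-free, vanishes. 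The $\sym_n$-equivariance is built into the construction: the action via $\sigma\times\id_C$ on $C^n\times C$ permutes the graphs by $\Gamma_i\mapsto\Gamma_{\sigma(i)}$, so the canonical linearisation $\mu_\sigma$ on $\pi_n^*E^{[n]}$ identifies the residue sequence indexed by $i$ with the $\sigma^*$-pull-back of the one indexed by $\sigma(i)$, yielding $\mu_\sigma(\UU_n(E,i))=\sigma^*\UU_n(E,\sigma(i))$.

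The main obstacle I anticipate is the scheme-theoretic bookkeeping in the second paragraph: verifying along the big diagonal (where several $x_k$ coincide) both the identification $\widetilde\Xi_n=\sum_i\Gamma_i$ as a Cartier divisor and the restriction identity $D_i|_{\Gamma_i}=\delta_n(i)$, and checking that the two flat base change squares compose compatibly with the residue sequence.
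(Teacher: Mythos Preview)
Your proposal is correct and follows essentially the same route as the paper: flat base change to identify $\pi_n^*E^{[n]}$ with the push-forward of $\mathcal O_{\widetilde\Xi_n}\otimes q^*E$, the decomposition $\widetilde\Xi_n=\Gamma_i+D_i$ yielding the residue sequence, and a second base change along $\overline\pr_i\times\id_C$ to identify the quotient. The only notable difference is in the disjointness argument: the paper observes directly that the two subsheaves $\mathcal O_{\Gamma_i}(-D_i|_{\Gamma_i})$ and $\mathcal O_{\Gamma_j}(-D_j|_{\Gamma_j})$ of $\mathcal O_{\widetilde\Xi_n}$ have set-theoretically disjoint supports (so intersect trivially before pushing forward), whereas you argue after push-forward via the generic fibre and torsion-freeness; both are valid and of comparable length.
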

\begin{proof}
The pull-back of the universal divisor $\Xi_n\subset C^n\times C$ along the flat morphism $\pi_n\times \id_C\colon C^n\times C\to C^{(n)}\times C$ is given by $(\pi_n\times\id_C)^*\Xi_n=D_n:=\sum_{k=1}^n \Gamma_{\pr_k}$.
By flat base change along the diagram 
\[
\xymatrix{
 C^n\times C \ar^{\pi_n\times \id_C}[r]  \ar_{\pr_{C^n}}[d] & C^{(n)}\times C \ar^{\pr_{C^{(n)}}}[d] \ar^{\quad\pr_C}[r]& C   \\
 C^n \ar^{\pi_n}[r] & C^{(n)} \,, & 
}
\]
we get, setting $q_n:=\pr^{C^n\times C}_C=\pr^{C^{(n)}\times C}_C\circ (\pi_n\times \id_C)\colon C^n\times C\to C$, the following isomorphism 
\begin{align}\label{eq:tautFM}\pi_n^*E^{[n]}\cong \pr_{C^n*}(\reg_{D_n}\otimes q_n^* E)\,.\end{align} 
Now, let us fix some $i\in [n]$. 
We note that $\sum_{k\neq i} \Gamma_{\pr_k}=(\overline\pr_i\times \id_C)^* D_{n-1}$, which gives $D_n=\Gamma_{\pr_i} + (\overline\pr_1\times \id_C)^* D_{n-1}$. Hence, we get a short exact sequence
\begin{align}\label{eq:Dses}
 0\to \reg_{\Gamma_{\pr_i}}(-\sum_{k\neq i} [\Gamma_{\pr_k}\cap \Gamma_{\pr_i}])\to \reg_{D_n}\to (\overline\pr_i\times\id_C)^*\reg_{D_{n-1}}\to 0
\end{align}
of coherent sheaves on $C^n\times C$. All the sheaves of this sequence are finitely supported over $C^n$. Hence, combining \eqref{eq:tautFM} and \eqref{eq:Dses}, gives the short exact sequence
\begin{align}\label{eq:Eses}
 0\to \pr_{C^n*}\bigl(\reg_{\Gamma_{\pr_i}}(-\sum_{k\neq i} [\Gamma_{\pr_k}\cap \Gamma_{\pr_i}])\otimes q_n^*E\bigr)\to \pi_n^* E^{[n]}\to \pr_{C^n*}\bigl((\overline\pr_i\times\id_C)^*\reg_{D_{n-1}}\otimes q_n^*E\bigr)\to 0
\end{align}
of coherent sheaves on $C^n$, which will turn out to be isomorphic to the asserted sequence \eqref{eq:keyses}. 
By flat base change along the diagram 
\[
\xymatrix{
 C^n\times C \ar^{\overline\pr_i \times \id_C}[r]  \ar_{\pr_{C^n}}[d] \ar@/^8mm/^{q_n}[rr] & C^{n-1}\times C \ar^{\pr_{C^{n-1}}}[d] \ar^{\quad q_{n-1}}[r]& C   \\
 C^n \ar^{\overline\pr_1}[r] & C^{n-1} \,, & 
}
\]
we see that $\pr_{C^n*}\bigl((\overline\pr_i\times\id_C)^*\reg_{D_{n-1}}\otimes q_n^*E\bigr)\cong \overline\pr_{i}^*\pi_{n-1}^*E^{[n-1]}$. To bring the first term of \eqref{eq:Eses} into the correct form, we consider the isomorphism 
\[t\colon C^n\xrightarrow\cong \Gamma_{\pr_1}\subset C^n\times C\quad,\quad (x_1,\dots, x_n)\mapsto (x_1,\dots, x_n;x_i)\,.\] 
Because of 
$\Gamma_{\pr_k}\cap \Gamma_{\pr_i}=\{(x_1,\dots,x_n;x)\in C^n\times C\mid x_k=x_i=x\}$,
we see that \[t^*\bigl(\sum_{k\neq i} [\Gamma_{\pr_k}\cap \Gamma_{\pr_i}]\bigr)=\delta_n(i)\,.\]
From this, it follows that 
$\pr_{C^n*}\bigl(\reg_{\Gamma_{\pr_i}}(-\sum_{k\neq i} [\Gamma_{\pr_i}\cap \Gamma_{\pr_i}])\cong \pr_i^*E(-\delta_n(i))$, which shows that the sheaves in \eqref{eq:Eses} are isomorphic to those in \eqref{eq:keyses}. 

The fact that, for $i\neq j$, the subsheaves $\pr_i^*E(-\delta_n(i))$ and $\pr_j^*E(-\delta_n(j))$ of $\pi_n^*E^{[n]}$ intersect trivially follows from the fact that $\reg_{\Gamma_{\pr_i}}(-\sum_{k\neq i} [\Gamma_{\pr_k}\cap \Gamma_{\pr_i}])$ and $\reg_{\Gamma_{\pr_j}}(-\sum_{k\neq j} [\Gamma_{\pr_k}\cap \Gamma_{\pr_j}])$ intersect trivially as subsheaves of $\reg_D$.

The final statement of the proposition follows from the fact that, for $\sigma\in \sym_n$ with $\sigma(i)=j$, we have the equality
\[\nu_\sigma(\reg_{\Gamma_{\pr_i}}(-\sum_{k\neq i} [\Gamma_{\pr_k}\cap \Gamma_{\pr_i}])=\sigma^*\reg_{\Gamma_{\pr_j}}(-\sum_{k\neq i} [\Gamma_{\pr_k}\cap \Gamma_{\pr_i}])\]
of subsheaves of $\sigma^*\reg_{D_n}$, where $\nu$ is the natural $\sym_n$-linearisation of the pull-back $\reg_{D_n}=(\pi_n^*\times \id_C)^*\reg_{\Xi_n}$.
\end{proof}

\subsection{Degree and slope of tautological bundles}\label{subsec:mutaut}

There are well-known formulae for the Chern classes of tautological bundles; see \cite[Sect.\ 3]{Mattuck--sym}. In particular, we have
\begin{align}\label{eq:c1}
 \cc_1(\pi_n^* E^{[n]})= d \wH_n -r \delta_n\,.
\end{align}
Alternatively, this formula can easily be deduced inductively using the short exact sequence of \autoref{prop:ses}. For doing this, note that $\wH_n=\pr_i^*[x] + \overline\pr_i^*H_{n-1}$ and $\delta_n=\delta_n(i)+ \overline\pr_i^* \delta_{n-1}$ for every $i\in [n]$. Combining \eqref{eq:c1} with \eqref{eq:intersectionnumbers}, we get $\deg_{\wH_n}(\pi_n^* E^{[n]})= n!\bigl(d-(n-1)r\bigr)$ and 
\begin{align}\label{eq:mutaut}
 \mu_{\wH_n}(\pi^*_nE^{[n]})=\frac{(n-1)!(d-(n-1)r)}r=(n-1)!(\mu-n+1)\,.
\end{align}  
Since $\pi_n$ is finite of degree $n!$, we also get
\begin{align}\label{eq:mutautsym}
\mu_{H_n}(E^{[n]})=\frac{\mu_{\wH_n}(\pi^*_nE^{[n]})}{n!}=\frac{(d-(n-1)r)}{nr}=\frac{\mu-n+1}n 
\end{align}
\begin{remark}\label{rem:stabnec}
For an arbitrary, not necessarily locally free, coherent sheaf $A\in \Coh(C)$, we can still define an associated tautological sheaf on $C^{(n)}$ by $A^{[n]}:=a_*b^*(A)$; compare \autoref{subsec:taut}. Since $a$ is finite and $b$ is flat, the functor $a_*b^*\colon \Coh(C)\to \Coh(C^{(n)})$ is exact; compare \cite[Thm.\ 1.1]{Krug--reconstruction}. In particular, if $0\to E_1\to E_0\to A\to 0$ is a locally free resolution of $A$, then $0\to E_1^{[n]}\to E_0^{[n]}\to A^{[n]}\to 0$ is a locally free resolution of $A^{[n]}$. It follows that formula \eqref{eq:mutautsym} extends to a formula for slopes of tautological sheaves of positive rank, namely $\mu_{H_n}(E^{[n]})=\frac{\mu(A)-n+1}n$. It follows that, if $E$ is a vector bundle on $C$ and $A\subset E$ is a destabilising sheaf, then $A^{[n]}\subset E^{[n]}$ is again destabilising. In other words, (semi-)stability of $E^{[n]}$ implies (semi-)stability of $E$. 
\end{remark}

\section{Proof of the main result}\label{sec:proof}

\subsection{General part of the proof}\label{subsec:generalproof}

Let $E\in \VB(C)$ satisfy the assumptions of one of the four parts \ref{maini}, \ref{mainii}, \ref{mainiii}, \ref{mainiv} of \autoref{thm:main}. By \autoref{lem:stabpullbackequi}, in order to proof stability or semi-stability of $E^{[n]}$, we need to compare the slopes of $A$ and $\pi_n^*E^{[n]}$ for $A\subset \pi_n^*E^{[n]}$  a $\sym_n$-invariant subsheaf 
with $s:=\rank A<nr=\rank E^{[n]}$. 

For $i=1,\dots, n$, we set $A'(i):=A\cap \UU_n(E, 1)$ as an intersection of subsheaves of $\pi_n^* E^{[n]}$; compare \autoref{prop:ses}. We write the corresponding quotient as $A''(i)=A/A'(i)$. We also set $A'=A'(1)$ and $A''=A''(1)$, and get a commutative diagram with exact columns and rows
\begin{align}\label{diag:full}
\xymatrix{
 & 0 \ar[d] & 0 \ar[d] & 0 \ar[d] &\\
0\ar[r] &  A' \ar[r]  \ar[d] & A\ar[r]\ar[d] &  A''\ar[r]\ar[d]& 0\\
0\ar[r] & \pr_1^* E(-\delta_n(1))\ar[r] & \pi_n^* E^{[n]}\ar[r]& \overline \pr_{1}^*\pi_{n-1}^*E^{[n-1]}\ar[r] & 0}
\end{align}
where the bottom row is the short exact sequence form \autoref{prop:ses}.
We set $s'=\rank A'$ and $s''=\rank A''$ which gives $s=s'+s''$. By the last statement of \autoref{prop:ses} together with the $\sym_n$-equivariance of the subsheaf $A\subset \pi_n^*E^{[n]}$, we have $A'(i)\cong\sigma^*A'$ for any $\sigma\in \sym_n$ with $\sigma(i)=1$. In particular, $\rank A'(j)=s'$ for every $j=1,\dots,n$. By \eqref{eq:pairwisedisj}, we have $\bigoplus_{j=1}^nA'(j)\subset A$. Hence, we get the following inequalities of the ranks
\begin{align}\label{eq:sss}
 s\ge ns'\quad, \quad s''\ge (n-1)s'\quad,\quad ns''\ge (n-1)s\,. 
\end{align}

Now, we divide the proof that $\mu_{\wH_n}(A)\le\mu_{\wH_n}(\pi_n^*E^{[n]})$ (or, for the proof of parts \ref{mainii} and \ref{mainiv} of \autoref{thm:main}, that we have have a strict inequality) into the two cases of positive and negative $d=\deg E$, treated in the following two subsections.  

\subsection{Proof of the main theorem for bundles of positive degree}\label{subsec:positivedeg}

In this subsection, we proof parts \ref{maini} and \ref{mainii} of \autoref{thm:main}. Let $E\in \VB(C)$ be a semi-stable bundle with $d\ge (n-1)r$, equivalently $\mu\ge (n-1)$. By \autoref{lem:torvanishCn} and \autoref{lem:restrictiondeg}, there are points $x_2,\dots, x_n\in C$ such that, for $\iota\colon C\hookrightarrow C^n$ with $\iota(t)=(t,x_2,\dots,x_n)$ the closed embedding with image $C\times x_2\times \dots\times x_n$, we have \begin{align}\label{eq:degrespositive}\deg_{\wH_n}(A)=n!\deg(\iota^*A)\,,\end{align}
the rank of objects of diagram \eqref{diag:full} remain unchanged after pull-back by $\iota$, and the rows and columns of the diagram \eqref{diag:full} remain exact after pull-back by $\iota$. Since $C\times x_2\times\dots\times x_n$ is a section of $\pr_1\colon C^n\to C$ and a fibre of $\overline \pr_1\colon C^n\to C^{n-1}$, the restricted diagram takes the form 
\begin{align}\label{diag:res1}
\xymatrix{
 & 0 \ar[d] & 0 \ar[d] & 0 \ar[d] &\\
0\ar[r] &  \iota^*A' \ar[r]  \ar[d] &\iota^*A\ar[r]\ar[d] & \iota^*A''\ar[r]\ar[d]& 0\\
0\ar[r] & E(-x_2-x_3-\dots-x_n) \ar[r] & \iota^*\pi_n^* E^{[n]}\ar[r]& \reg_C^{\oplus r(n-1)}\ar[r] & 0}
\end{align}
By the semi-stability of $E(-x_2-x_3-\dots-x_n)$ and $\reg_C^{\oplus r(n-1)}$, we get
\begin{align}\label{ineq1}\mu(\iota^* A')\le \mu(E(-x_2-x_3-\dots-x_n))=\mu-n+1\end{align}
and $\mu(\iota^*A'')\le 0$. Hence,
\begin{align}\label{ineq2}
 \deg(\iota^*A)=\deg(\iota^*A')+\deg(\iota^*A'')= s'\mu(\iota^*A')+s''\mu(\iota^*A'')\le s'(\mu-n+1)\,.
\end{align}
By \eqref{eq:degrespositive}, and by the inequality $ns'\le s$ of \eqref{eq:sss} combined with the assumption $\mu\ge n-1$,  
\begin{align}\label{ineq3}
 \mu_{\wH_n}(A)=n!\frac{\deg(\iota^*A)}s\le n! \frac{s'}s(\mu-n+1)\le (n-1)!(\mu-n+1)=\mu(\pi_n^*E^{[n]})\,.
\end{align}
By \autoref{lem:stabpullbackequi}, this shows that $E^{[n]}$ is semi-stable. 

Let now $E$ be stable and $\mu>n-1$. Then, by the stability of $E(-x_2-x_3-\dots-x_n)$,
the inequality \eqref{ineq1} is strict. Accordingly, the inequality in \eqref{ineq2} and the first inequality in \eqref{ineq3} are strict, except for if $s'=0$. However, for $s'=0$ the second inequality of \eqref{ineq3} is strict, due to the assumption $\mu>n-1$. Hence, in any case, we have $\mu_{\wH_n}(A)<\mu_{\wH_n}(\pi_n^*E^{[n]})$ so that $E^{[n]}$ is stable by \autoref{lem:stabpullbackequi}.

\subsection{Proof of the main theorem for bundles of negative degree}\label{subsec:negativedeg}
In this subsection, we prove part \ref{mainiii} and \ref{mainiv} of \autoref{thm:main}. So, let $E\in \VB(C)$ be a semi-stable bundle with $\mu\le -1$. 
We argue by induction on $n$ that $\mu_{\wH_n}(A)\le \mu_{\wH_n}(\pi_n^*E)$ for every $\sym_n$-equivariant subsheaf with $s:=\rank A<nr=\rank E^{[n]}$. For $n=1$, the assertion is trivial as $\pi_1^*E^{[1]}=E$. 
Let now $n\ge 2$.
By \autoref{lem:torvanishCn} and \autoref{lem:restrictiondeg}, there is an $x\in C$ such that, for $\iota\colon C^{n-1} \hookrightarrow C^n$ with $\iota(t_2,\dots,t_n)=(x,t_2,\dots,t_n)$ the closed embedding with image $x\times C^{n-1}$, we have 
\begin{align}\label{eq:degres}\deg_{\wH_n}(A)=n\deg_{\wH_{n-1}}(\iota^*A)\,,\end{align}
the rank of objects of diagram \eqref{diag:full} remain unchanged after pull-back by $\iota$, and the rows and columns of the diagram \eqref{diag:full} remain exact after pull-back by $\iota$.
Noting that $\iota^*(\delta_n(1))=\wH_{n-1}$ and $\opr_1\circ \iota=\id_{C^{n-1}}$, the restricted diagram takes the form
\begin{align}\label{diag:res2}
\xymatrix{
 & 0 \ar[d] & 0 \ar[d] & 0 \ar[d] &\\
0\ar[r] &  \iota^*A' \ar[r]  \ar[d] &\iota^*A\ar[r]\ar[d] & \iota^*A''\ar[r]\ar[d]& 0\\
0\ar[r] & \reg(-\wH_{n-1})^{\oplus r}\ar[r] & \iota^*\pi_n^* E^{[n]}\ar[r]& \pi_{n-1}^*E^{[n-1]}\ar[r] & 0}
\end{align}
By the induction hypothesis, together with \eqref{eq:mutaut}, we get 
\begin{align}\label{eq:muA''}
\mu_{\wH_{n-1}}(\iota^*A'')\le \mu_{\wH_{n-1}}(\pi_{n-1}^*E^{[n-1]})= (n-2)!(\mu-n+2)\,. 
\end{align}
Furthermore, the inclusion $\iota^*A'\hookrightarrow \reg(-\wH_{n-1})^{\oplus r}$ combined with \eqref{eq:intersectionnumbers} gives
\begin{align}\label{eq:muA'}
 \mu_{\wH_{n-1}}(\iota^*A')\le \mu_{\wH_{n-1}}(\reg(-\wH_{n-1})^{\oplus r})=-(n-1)!\,.
\end{align}
Combining \eqref{eq:muA''} and \eqref{eq:muA'}, we get 
\begin{align}
 \deg_{\wH_{n-1}}(\iota^*A)&=s''\mu_{\wH_{n-1}}(\iota^*A'')+s'\mu_{\wH_{n-1}}(\iota^*A')\notag\\
&\le (n-2)!\bigl(s''(\mu-n+2)-s'(n-1) \bigr)\,. \label{eq:upperbound}
\end{align}
By the assumption $\mu\le -1$, we have $\mu-n+2\le -(n-1)$. Hence, \eqref{eq:upperbound} is maximised if the inequality $s''\ge s'(n-1)$ from \eqref{eq:sss} is an equality. This gives
\[
\deg_{\wH_{n-1}}(\iota^*A)\le (n-2)!\bigl(s''(\mu-n+2)-s'' \bigr)=s''(n-2)!(\mu-n+1) 
\]
We get the following chain of inequalities
\begin{align}\label{eq:finalchain}
 \mu_{\wH_n}(A)=\frac{\deg_{\wH_n}(A)}s\le \frac{ns''(n-2)!(\mu-n+1)}{s}\le (n-1)!(\mu-n+1) =\mu_{\wH_n}(\pi_n^*E^{[n]})\,,
\end{align}
where the first inequality is due to \eqref{eq:degres}, the second is due to the inequality $ns''\ge (n-1)s$ of \eqref{eq:sss} together with the fact that $\mu-n+1$ is non-positive, and the last equality is \eqref{eq:mutaut}.
This proves that $E^{[n]}$ is semi-stable.

Let now $E$ be stable and $\mu<-1$. Proceeding as before by induction, we see that, if $s''<(n-1)r$, the inequality \eqref{eq:muA''} is strict. Hence, the first inequality of \eqref{eq:finalchain} is strict too. If $s''=(n-1)r$, we have $ns''>(n-1)s$ since $s<nr$. It follows that, in this case, the second inequality of \eqref{eq:finalchain} is strict.

\section{The numerical conditions are sharp}\label{sec:sharp}

In this section, we observe that the numerical conditions in \autoref{thm:main} on the slope cannot be weakened. For this, we consider examples of (semi)-stable bundles on $C$ with various values $\mu(E)\in [-1,n-1]$ such that $E^{[n]}$ is unstable.

Let $\ell\ge 0$, $x\in C$, and $L=\reg_C(\ell\cdot x)$. Any non-zero section of $L$ induces a non-zero section of $L^{[n]}$; see \cite[Corollary of Prop.\ 1]{Mattuck--sym}. Hence, $\reg_{X^{(n)}}$ is a subsheaf of $L^{[n]}$. For $0\le \ell< n-1$, we have $\mu(L^{[n]})<0$; see \eqref{eq:mutautsym}. Hence, in this case, the subsheaf $\reg_{X^{(n)}}$ is destabilising. For $\ell=n-1$, we have $\mu(L^{[n]})=0$ and $L^{[n]}$ is properly semi-stable.

In a similar way, we get examples of higher rank and non-integer slope: Whenever $E\in \VB(C)$ has $\mu(E)<n-1$ and $h^0(E)>0$, the structure sheaf $\reg_{C^{(n)}}$ is a destabilising subsheaf of $E^{[n]}$. For many curves $C$ and many values of $d$ and $r$ such that $\mu(E)<n-1$, the existence of stable bundles with $h^0(E)>0$ is guaranteed by Brill--Noether theory.  

The tautological bundles $L^{[n]}$ associated to $L=\reg(-x)$, which have slope $\mu(L^{[n]})=-1$ for every $n\in\IN$, can also be shown to be properly semi-stable as follows. We consider the bundle $L^{\boxplus n}:=\bigoplus_{i=1}^n\pr_i^*L$ on $C^n$ equipped with the $\sym_n$-linearisation given by permutation of the direct summands. We have an isomorphism $L^{[n]}\cong \pi_{n*}^{\sym_n}L^{\boxplus n}$, where $\pi_{n*}^{\sym_n} L^{\boxplus n}$ are the invariants of $\pi_{n*}L^{\boxplus n}$ under the $\sym_n$-linearisation. Every morphism $s\colon L\hookrightarrow \reg_C$ induces an $\sym_n$-equivariant embedding $L^{\boxtimes n}:=\bigotimes_{i=1}^n\pr_i^* E\hookrightarrow L^{\boxplus n}$ with components \[s^{\boxtimes i-1}\boxtimes \id\boxtimes s^{\boxtimes n-i}\colon L^{\boxtimes n}\to\pr_i^*L=\reg_C^{\boxtimes i-1}\boxtimes L\boxtimes \reg_C^{\boxtimes n-1}\,. \] 
Since $\pi_{n*}^{\sym_n}$ is exact, we have an inclusion $\pi_{n*}^{\sym_n} L^{\boxtimes n} \hookrightarrow L^{[n]}$. Furthermore, $\pi_n^*\pi_{n*}^{\sym_n} L^{\boxtimes n}\cong L^{\boxtimes n}$. Hence, 
\[
 \mu_{H_n}(\pi_{n*}^{\sym_n}L^{\boxtimes n})=\frac{\mu_{\wH_n}(L^{\boxtimes n})}{n!}=-1= \mu_{H_n}(L^{[n]})\,,
\]
which shows that $L^{[n]}$ is properly semi-stable.

Note however, that it is still possible that there are stable tautological bundles with slope lying in the interval $[-1,0]$. At least, there are stable tautological bundles on the boundary of this interval in the case $n=2$: If $L$ is of degree $1$ but not isomorphic to $\reg_C(x)$ for any $x\in C$, or of degree $-1$ but not isomorphic to $\reg_C(-x)$ for any $x\in C$, the tautological bundle $L^{[2]}$ is stable (not only semi-stable) of slope $-1$ or $0$; see \cite{BiswasNagaraj-stab}.

\bibliographystyle{alpha}
\addcontentsline{toc}{chapter}{References}
\bibliography{references}

\end{document}